\newtheorem{theorem}{Theorem}[section]
\newtheorem{corollary}[theorem]{Corollary}
\newtheorem{lemma}[theorem]{Lemma}
\newtheorem{examp}{Example}
\newenvironment{example}{\begin{examp}\small\sf}{\end{examp}}
\newcommand\bc{\begin{center}}
\newcommand\ec{\end{center}}
\newcommand\bn{\begin{enumerate}}
\newcommand\en{\end{enumerate}}
\newcommand\TW{\textwidth}
\def\rf#1{(\@rf#1,.)}
\def\@rf#1,{\ref{eq:#1}\@ifnextchar . {\@endrf}{, \@rf}}
\def\@endrf.{}
\newcommand\apref[1]{Appendix~\ref{sc:#1}\xspace}
\newcommand\fgref[1]{Figure~\ref{fg:#1}\xspace}
\newcommand\lmref[1]{Lemma~\ref{lm:#1}\xspace}
\newcommand\thref[1]{Theorem~\ref{th:#1}\xspace}
\newcommand\ssrf[1]{\S\ref{ss:#1}\xspace}
\renewcommand\_[1]{\mathbf{#1}}
\renewcommand\:{\,{:}\,} 
\newcommand\ds{\displaystyle}
\newcommand\ninf{{-\!\infty}}
\newcommand\R{\mathbb{R}}
\renewcommand{\d}{\mathrm{d}}
\newcommand\val{\mathop{\textup{Val}}}
\newcommand\lam{\lambda}
\newcommand\sij{\sigma_{ij}}
\newcommand\set[2]{\{\,#1\mid\mbox{#2}\,\}}
\newcommand\dbd[2]{\frac{\partial #1}{\partial #2}} 
\newcommand\dbdt[2]{\partial #1/\partial #2} 
\newcommand\matlab{{\sc Matlab}\xspace}
\newcommand\SA{structural analysis\xspace}
\newcommand\Smethod{$\Sigma$-method\xspace}
\newcommand\sigmx{signature matrix\xspace}
\newcommand\sysJ{system Jacobian\xspace}
\newcommand\nc{\newcommand}
\nc\rnc{\renewcommand}
\nc\bib{Bibliography}
\nc\images{images}
\nc{\workprecdir}{WORK_PREC}
\nc\NN[1]{{\color{red}{Ned: #1}}}
\nc\JP[1]{{\color{blue}{John: #1}}}
\nc\para[1]{\par\medskip\noindent{\bfseries\slshape#1.}}
\nc\HI[1]{{\color{red}#1}}
\nc\HIb[1]{{\color{blue}#1}}
\nc\NC[1]{{\color{red}{Ned: #1}}}
\def\<#1,#2>{#1\cdot#2} 
\nc{\pnt}[1]{\?{#1}}
\nc\aap{\.1{\_a}}
\nc\Abar{\ol{A}}
\nc\Ap{\.1{\?a}}
\nc\bbp{\.1{\_b}}
\nc\bbr[1]{\left[\!\left[#1\right]\!\right]}
\nc\Bp{\.1{\?b}}
\nc\C{\mathbb{C}}
\nc\cen{\ol{\?M}} 
\nc\cenmov{\?M} 
\nc\conj[1]{\overline{#1}}
\nc\Cp{\.1{C}}
\nc\cplx{\mathbb{C}}
\nc\cross{{\pmb{\times}}}
\nc\Cross{\pmb{\bigtimes}}
\nc\Ominus{\mathop{\pt{$-$}}}
\nc\Ocross{\mathop{\pt{$\x$}}}
\nc\pt[1]{\text{\textcircled{#1}}}
\nc\ddt{\dfrac{\d}{\d t}}
\nc\DOF{\textsc{dof}\xspace}
\nc\e[1]{\texttt{e#1}}
\nc\KinEn{\textup{KinEn}}
\nc\Uxp{\dot{\Ux}}
\nc\Uyp{\dot{\Uy}}
\nc\Uzp{\dot{\Uz}}
\nc\In{\pmb{\text{I}}}
\nc\Inhat{\widehat{\mathbf{I}}_{\?c}}
\nc\KErod[3]{\frac{#1}6 (\.1{\?{#2}}^2 + \.1{\?{#2}}\cdot\.1{\?{#3}} + \.1{\?{#3}}^2)}
\nc\Lag{\mathcal{L}}
\nc\Mmatrix{\_M}
\nc\nrm[1]{\left|#1\right|}
\nc\Nrm[1]{\left\|#1\right\|}
\nc\om{\omega}
\nc\omhat{\widehat{\om}}
\nc\phip{\.1{\phi}}
\nc\Parts{\mathcal{P}}
\nc\Qbar{\ol{Q}}
\nc\pp{\.1{p}}
\nc\rp{\.1{r}}
\nc\qp{\.1{\_q}}
\nc\up{\.1{\_u}}
\nc\vp{\.1{\_v}}
\rnc\wp{\.1{\_w}}
\nc\Ux{\widehat{\_x}}
\nc\Uy{\widehat{\_y}}
\nc\Uz{\widehat{\_z}}
\nc\Qec{\ol{P}}
\nc\Qhat{\widehat{Q}}
\nc\Qhatp{\.1\Qhat}
\nc\Qhatec{\widehat{P}}
\nc\Qp{\.1{Q}}
\nc\Rbod{\ensuremath{\mathcal{R}}\xspace}
\nc\Rec{\ol{R}}
\nc\thetap{\.1{\theta}}
\nc\thOA{\theta_{\?O\?A}}
\nc\rrp{\.1{\_r}}
\nc\xp{\.1{x}}
\nc\yp{\.1{y}}
\nc\zp{\.1{z}}
\nc\uhat{\widehat{u}}
\rnc\vec[1]{\overline{#1}}
\nc\x{\times}
\nc\X{\ensuremath{\mathcal{X}}\xspace}
\nc\xhat{\widehat{x}}
\nc\Xbar{\ol{X}}
\nc\xibar{\ol{\xi}}
\nc\rfsol{Q}
\nc\tol{\text{\tt tol}}
\nc\csol{Q^{\tol}}
\nc\tend{t_\text{end}}
\nc\scd{\text{\sc scd}}
\nc\m{\text{m}}
\nc\s{\text{s}}
\nc\Kg{\text{Kg}}
\nc\inter{\shortintertext}
\nc\mx[1]{\begin{matrix}#1\end{matrix}}
\nc\bmx[1]{\begin{bmatrix}#1\end{bmatrix}}
\nc\pmx[1]{\begin{pmatrix}#1\end{pmatrix}}
\nc\smallmx[1]{\begin{smallmatrix}#1\end{smallmatrix}}
\nc\smallbmx[1]{\left[\begin{smallmatrix}#1\end{smallmatrix}\right]}
\nc\ypos{\textup{YPos}}
\nc\ol[1]{\overline{#1}}
\nc\cpp{\texttt{C++}\xspace}
\nc\daets{\textsc{Daets}\xspace}
\nc\klu{\textsc{Klu}\xspace}
\nc\ipopt{\textsc{Ipopt}\xspace}
\nc\dof{degrees of freedom\xspace}
\nc\fadbad{\textsc{FADBAD++}\xspace}
\nc\KE{kinetic energy\xspace}
\nc\PE{potential energy\xspace}
\nc\lagfac{Lagrangian facility\xspace}
\nc\mechfac{mechanism facility\xspace}
\nc\MoI{moment of inertia\xspace}
\nc\NCs{Natural Coordinates\xspace}
\nc\ncs{natural coordinates\xspace}
\nc\UP{useful point\xspace}
\nc\UPs{\UP{s}\xspace}
\nc\BP{basic point\xspace}
\nc\BPs{\BP{s}\xspace}
\nc\BPV{basic point or vector\xspace}
\nc\BPVs{basic points or vectors\xspace}
\nc\TS{Taylor series\xspace}
\nc\WF{world-frame\xspace}
\nc\wrt{with respect to\xspace}
\nc{\yml}{{\sc Yaml}\xspace}
\nc\JB{Jalon1994KDS}
\nc\KWB{krauswincklerbock2001}
\nc\vS{vonschwerin2012MBS}
\nc\lnref[1]{line~\ref{ln:#1}}
\nc\lnsref[2]{lines~\ref{ln:#1}--\ref{ln:#2}}
\nc\Lnref[1]{Line~\ref{ln:#1}}
\nc\Lnsref[2]{Lines~\ref{ln:#1}--\ref{ln:#2}}
\nc\li{\protect\lstinline}
\nc\lii{\protect\lstinline[keywords={}]}
\nc\pH{port-Hamiltonian\xspace}
\nc\Sch{\cite{Scholz2017sigmacircuits}\xspace}
\nc\Schref[1]{\cite[#1]{Scholz2017sigmacircuits}\xspace}
\nc\ddbd[3]{\frac{\partial^2#1}{\partial#2\partial#3}} 
\nc\nT{n{-}1}
\nc\nN{m{-}n{+}1}
\nc\calC{\mathcal{C}}
\nc\calG{\mathcal{G}}
\nc\calI{\mathcal{I}}
\nc\calL{\mathcal{L}}
\nc\calR{\mathcal{R}}
\nc\calT{\mathcal{T}}
\nc\calV{\mathcal{V}}
\nc\F{\widetilde{F}}
\rnc\v{\pmb{\nu}}
\rnc\i{\pmb{\iota}}
\nc\I{\mathbb{I}}
\rnc\s[1]{\mbox{\footnotesize\ensuremath{#1}}}
\nc\st[1]{\mbox{\footnotesize #1}}
\nc\twofbox[6]{{\footnotesize
\begin{blockarray}{rcc>{\scriptsize}l}
&#1  &c_i\\     
\cline{2-3}
\begin{block}{r|cc|l}
#2&&&#3 \\ 
#4&&&#5 \\ 
\cline{2-3}
\end{block}
d_j&#6        
\end{blockarray}
}}
\begin{document}

\title[Compact port-Hamiltonian circuit analysis is SA-amenable]{A compact, structural analysis amenable, port-Hamiltonian circuit analysis}

\author{John D.~Pryce}

\begin{abstract}
This article presents a simple port-Hamiltonian formulation of the equations for an RLC electric circuit as a differential-algebraic equation system, and a proof that structural analysis always succeeds on it for a well-posed circuit, thus providing a correct regularisation for numerical solution.
The DAE is small: its size is at most the number of edges in the circuit graph.
\end{abstract}

\maketitle

\section{Introduction}\label{sc:intro}

\subsection{Overview}
Presented here is a simple \pH (pH) formulation of electric circuit equations as a DAE and a proof that \SA (SA) always succeeds on it for a well-posed circuit, thus providing a correct regularisation for numerical solution.
We call it {\em Compact port-Hamiltonian} analysis (CpH).

CpH can be stated in few words.
{\em Choose the pH-recommended (e.g.\ \cite[Appendix B.1]{vanderschaftjeltsema2014}) state variable for each element, giving a state vector with a component for each of the $m$ edges of the circuit graph; express the edge voltages and currents as explicit functions of the state; using an optimal tree according to Tischendorf index theory, apply Kirchoff's current law KCL across the cutset defined by each tree edge, and voltage law KVL round the loop defined by each cotree edge.}
\smallskip

The resulting DAE has size $m$; in fact less, if one exploits the fact that state variables associated with voltage and current sources can be removed and treated as output variables.

Our preferred SA method is the \Smethod \cite{Pryce2001a}. If, as here, only first derivatives occur in the equations then the Pantelides method \cite{Pant88b}, the \Smethod and the Mattsson--S\"oderlind dummy derivatives method \cite{Matt93a} are equivalent in the sense that if one succeeds, they all do.
\smallskip

In a pH approach, see e.g.\ \cite{vanderschaftjeltsema2014},
\begin{enumerate}[(1)]
  \item A system is  a union of {\em energy-storing}, {\em energy-dissipating} and {\em energy-routing} elements connected by {\em ports}.
  \item The energy-storing part is represented by a Hamiltonian $H$ giving the system's total energy. State variables are chosen so that $H$ is an algebraic function of them.
  \item Energy-routing is specified by a {\em Dirac structure}; for circuits it is the incidence matrix of the circuit graph, or an equivalent mathematical object, that describes the topology.
\end{enumerate}
\smallskip

We make much use of methods in the 2017 Lena Scholz report \Sch referred to for short as LS.
Her overview of the \Smethod in LS \S4.1 suffices here and it will not be described further.

But a seemingly minor difference between the LS approach and ours has a deep effect.
Throughout \Sch including in the \pH sections she chooses that
\begin{compactitem}
  \item for a capacitor, charge $q$ is a function of voltage;
  \item for an inductor, flux $\phi$ is a function of current.
\end{compactitem}
By contrast the recommended pH approach takes $q$ and $\phi$ as state variables and thus {\em independent}.

This reversal is what simplifies CpH.
The Hamiltonian $H$ is a sum of terms including $H_C(q) =$ capacitor energy and $H_L(\phi) =$ inductor energy.
Capacitor voltage $\v$ appears as $\dbdt{H}{q}$, current as $\dot{q}$. Inductor current $\i$ appears as $\dbdt{H}{\phi}$, voltage as $\dot{\phi}$.
E.g.\ a capacitor with linear behaviour contributes $q^2/2C$ to $H$ so $\v=q/C$.

LS states five assumptions (A1)--(A5) underlying her analysis.
(A1)--(A2) are needed because the circuit elements are ideal. They forbid%
\footnote{But joining, e.g., two real car batteries in this way is OK---precisely because they are {\em not} ideal sources.},
e.g., two voltage sources $V(t),V'(t)$ with the same start and end nodes because this makes the circuit contradictory (if $V(t)\ne V'(t)$ can happen) or underdetermined (if it can't).

(A3)--(A5) state {\em passivity}---non-source elements can't create energy from nothing.
Implicit in their wording is that charge on each capacitor might depend jointly on all capacitor voltages but on nothing else; flux on each inductor might depend jointly on all inductor currents but nothing else; current in each resistor might depend jointly on all resistor voltages but nothing else.

We restate (A3) for capacitors and (A4) for inductors in terms of $H$, keeping the same ``jointly'' behaviour.
The technical condition is that the Jacobians of the above dependences be symmetric positive definite (SPD).
Dependence reversal and the relation to the Hamiltonian imply
\begin{align}
  \dbdt{q}{\v} = \left(\dbdt{^2 H}{q^2}\right)\`,\qquad
  \dbdt{\phi}{\i} = \left(\dbdt{^2 H}{\phi^2}\right)\`.
\end{align}
A matrix is SPD iff its inverse is, so (A3), (A4) in our context require these Hessian matrices of $H$ to be SPD.
However we use (A5) for resistors unaltered.

\subsection{The LS assumptions rewritten}\label{ss:LSassumptions}
We take the content of the assumptions in LS \S3.1 unchanged, but reword them to match the pH approach, and because our incidence matrix is the transpose of hers.

We take the incidence matrix $A$ of the circuit graph $G$, with $m$ edges and $n$ nodes, to be $m\x n$, so rows mean edges. In each row $a_{ij}$ is $+1$ if $j$ is the start node of the $i$th edge, $-1$ if the end node, and $0$ otherwise.
Assume $G$ is connected, so $A$ has rank $\nT$ and its null space $\set{x\in\R^n}{$Ax=0$}$ is the span of $e = (1,\ldots,1)^*$.
(Because ``$T$'' is ubiquitous meaning a tree, $^*$ instead of $^T$ is used for transpose throughout.)

$B$ is the reduced incidence matrix, $A$ with one column removed corresponding to a grounded node. 
The rows are assumed (re-)ordered by type of component so $B$ is split into blocks $[B_{\?C};\ B_{\?L};\ B_{\?R};\ B_{\?V};\ B_{\?I}]$ where $B_{\?C}$ describes the capacitors, etc. Here semicolon denotes vertical catenation as in \matlab.

{\em
We say that the DAE system (6) is well-posed if it satisfies the following assumptions.
\begin{enumerate}[\bf({A}1)\hspace{-.3em}]
\item The circuit contains no $\?V$-loops, i.e., $B_{\?V}$ has full row rank.
\item The circuit contains no $\?I$-cutsets, i.e., $[B_{\?C}\ B_{\?L}\ B_{\?R}\ B_{\?V}]$ has full column rank.
\item$'$ The Hamiltonian is a twice continuously differentiable function of the vector of charge $q$ on the capacitors, and the Hessian
  \[ \?C(q) := \dbd{^2 H}{q^2} \]
is pointwise SPD.
\item$'$ The Hamiltonian is a twice continuously differentiable function of the vector of flux $\phi$ on the inductors, and the Hessian
  \[ \?L(\phi) := \dbd{^2 H}{\phi^2} \]
is pointwise SPD.
\item The conductance function $g$ is a continuously differentiable function of the vector of voltage across the resistors, and the Jacobian
  \[ \?G(\v) := \dbd{g}{\v} \]
is pointwise SPD.
\end{enumerate}
}
Owing to dependence reversal, our $\?C$ and $\?L$ are the {\em inverse} of those in LS.

\section{The CpH equations}

\subsection{DAE details}

We expand the short description given at the start.
Equations (11a, 11b) in LS \S3.3 constitute the CpH equations.
We write them as
\begin{align}\label{eq:CpH1}
  \v_N = -F \v_T, \qquad \i_T = F^* \i_N,
\end{align}
forming the model's Dirac structure in the pH sense.
We call $F$ the loop-cutset matrix, see \ssrf{Fmatrix}.
In LS they are part of the {\em Branch-Oriented Model} BOM, shown in LS \S4.4 to be SA-amenable. Though its DAE has size $2m$, not $m$, it is clear BOM and CpH are closely related; much of our argument adapts Scholz's BOM analysis.

In \rf{CpH1}, $\v,\i$ are the vectors of edge voltages and currents and have been split according to some tree $T$ of $G$ into
\begin{align}
  \begin{aligned}
  \v_T, \i_T &= \text{vectors of twig voltages/currents, of length $\nT$}, \\
  \v_N, \i_N &= \text{vectors of link voltages/currents, of length $\nN$}.
  \end{aligned}
\end{align}
We identify the edges with indices in $1\:m$ according to some chosen numbering, and regard $T$ as a subset of $1\:m$, of size $\nT$.
The cotree is the complement $N=1\:m \setminus T$, of size $\nN$ (where $N$ is for ``non-tree'').

As in LS, tree edges are {\em twigs}, cotree edges are {\em links}.
Each twig $f$ defines a (fundamental) {\em cutset}: the set of links whose addition reconnects $T\setminus\{f\}$.
Each link $e$ defines a (fundamental) {\em loop}: the set of twigs whose removal does not disconnect $T\cup\{e\}$.

\para{CpH DAE}
\begin{enumerate}[1.]
  \item For each edge $i=1\:m$ define state-variable $x_i$ according to the type of edge element:
  \begin{align}\label{eq:statevardef}
  \begin{tabular}{c|l|l}
    \bf element code &\bf element type &\bf state variable  \\\hline
     $C$ & capacitor     & charge $q$ \\
     $L$ &inductor       &flux linkage $\phi$ \\
     $V$ &voltage source &current $\i$ through it \\
     $I$ &current source &voltage $\v$ across it \\
     $R$ &resistor       &voltage $\v$ across it
  \end{tabular}
  \end{align}
  This specifies the $m$-vector $x$ of state variables.
  \item Use the constitutive equations---details in equation \rf{videf1}---to express the $m$-vectors $\v,\i$ of edge voltages and currents as functions of $x$, $\xp$ and (for a source) $t$.\\
  Assumptions (A3)--(A5) support both nonlinear element behaviour, and limited dependence between elements.
  \item For each twig of $T$, apply KCL across the cutset it defines; for each link, apply KVL round the loop it defines.
  This gives $\nT$ KCL equations in terms of $\i=\i(t,x,\xp)$, and $\nN$ KVL equations in terms of $\v=\v(t,x,\xp)$.
  
  These equations, which form $f(t,x,\xp)=0$, are precisely \rf{CpH1}.
\end{enumerate}
The $m$ equations used depend upon the chosen tree $T$, but the variables are always the same.

\subsection{Computing the loop-cutset matrix}\label{ss:Fmatrix}

The matrix $F$ can be found as follows. $T$ is a tree iff the corresponding $\nT$ rows of $A$ are a basis of $A$'s row space.
By elementary column operations, $A$ can be transformed to reduced column echelon form, say $\F$, where column $n$ is zero and the tree-rows become the $\nT$ unit vectors $(1,0,\ldots,0),\;(0,1,\ldots,0),\;\ldots$ in some chosen order.
Then $F$ comprises the cotree-rows of $\F$, less the last column.
Think of the row numbers $1\:m$ as permanent labels attached to the edges. 
By a permutation matrix $P$ we can put $\F$ in the form 
\begin{align}
  P \F &=
\begin{blockarray}{rccl}
              &~ \\
\begin{block}{r[c|c]l}
  \st{tree}   &\I &0 &\s{\nT} \\ \cline{2-3}
  \st{cotree} &F   &0 &\s{\nN} \\
\end{block}
              &\s{\nT} &\s1 & \\
\end{blockarray}
\quad= P A X \quad\text{for some nonsingular $X$}
\end{align}\\[-4ex]
where the labels are permuted with the rows.
Now $\I$, the size $\nT$ identity, carries the labelling of its rows to a labelling of the $\nT$ columns of $F$, so discarding these top $\nT$ rows we have
\begin{align}
\raisebox{-1ex}{\begin{blockarray}{rcl}
              &\st{tree $T$} \\
\begin{block}{r[c]l}
  \st{cotree $N$} &$F$         &\s{\nN} \\
\end{block}
              &\s{\nT} & \\
\end{blockarray}
}
= Q A Y
\end{align}\\[-4ex]
where $Q$ is $P$ less its first $\nT$ rows and $Y$ is $X$ less its last column. In \matlab, all this can be achieved by the single statement
\begin{align}\label{eq:computeF}
 \texttt{F = -A(N,1:end-1)/A(T,1:end-1);}
\end{align}
where integer arrays \li{T,N} enumerate $T$ and $N$ in some chosen order.
The minus sign is inserted to accord with the sign convention in \rf{CpH1}, which is used in LS.

Like $A$, matrix $F$ has only entries in $\{-1,0,1\}$. 
Use the notation $F[e,f]$ to mean the entry of $F$ in the row and column labelled by link $e$ and twig $f$ respectively.
$F$ encodes the fundamental cutsets and loops of $T$, namely
\begin{align}
  F[e,f]\ne0 &\iff \text{($f$ is in the loop defined by $e$)} \iff \text{($e$ is in the cutset defined by $f$)},
\end{align}
with the sign, $\pm1$, giving the orientation of the edge within the loop or cutset.
It is now easily seen that \rf{CpH1} defines exactly the equations in step 3 of the CpH model.

\begin{example}
\begin{figure}
\includegraphics[width=.5\TW]{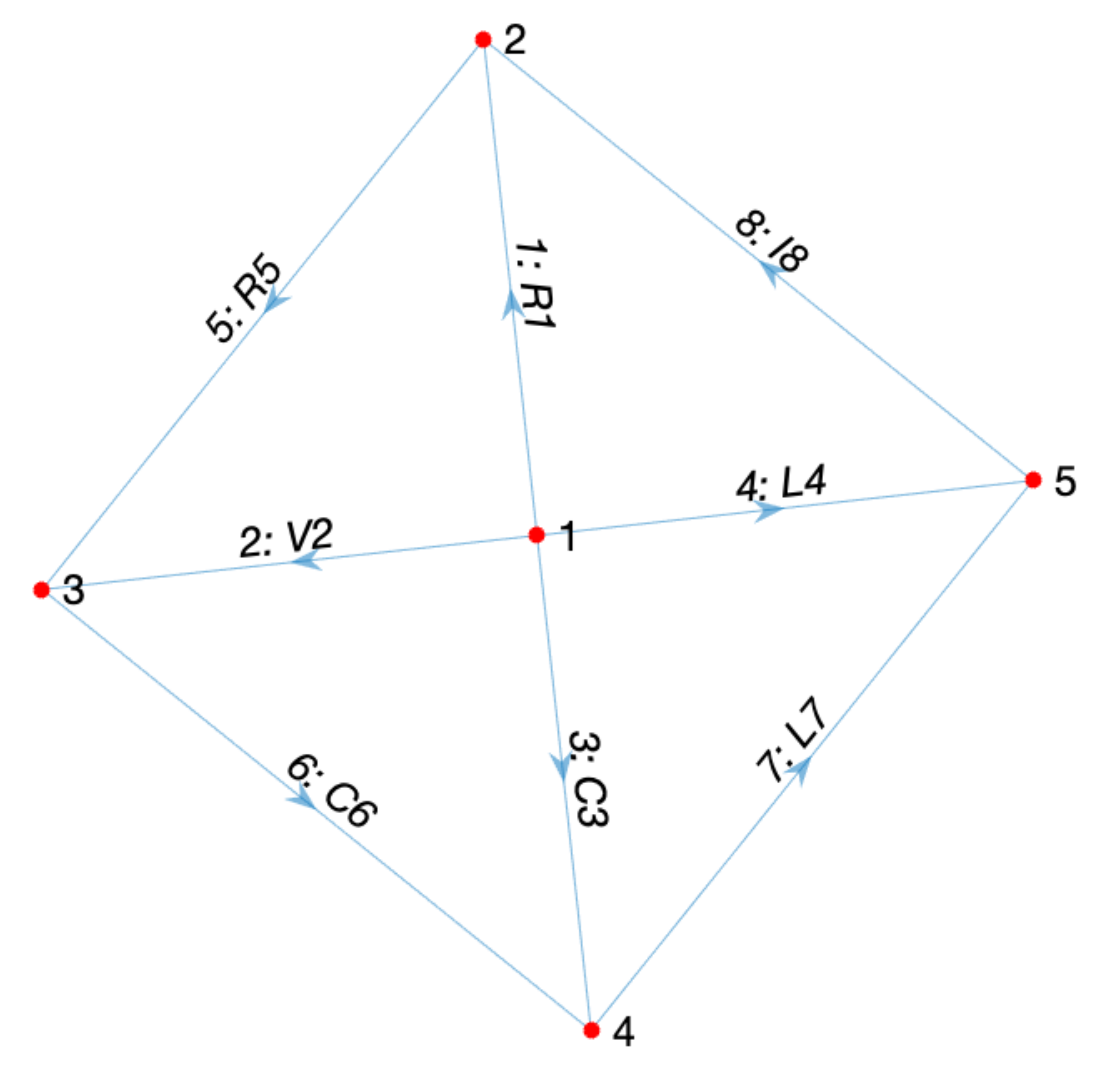}
\caption{8-edge, 5-node example. \label{fg:CpH8by5ex}}
\end{figure}
As a running example we use the 8-edge, 5-node circuit in \fgref{CpH8by5ex}.
We choose as optimal tree (there are several) edges $1,2,3,4$ joined to node $1$ at the centre.
Thus the basic data are
\begin{align}
  &A= \begin{blockarray}{r[rrrrr]}
  \s{r1} & 1 &-1 & 0 & 0 & 0 \\
  \s{v2} & 1 & 0 &-1 & 0 & 0 \\
  \s{c3} & 1 & 0 & 0 &-1 & 0 \\
  \s{l4} & 1 & 0 & 0 & 0 &-1 \\
  \s{R5} & 0 & 1 &-1 & 0 & 0 \\
  \s{C6} & 0 & 0 & 1 &-1 & 0 \\
  \s{L7} & 0 & 0 & 0 & 1 &-1 \\
  \s{I8} & 0 &-1 & 0 & 0 & 1
  \end{blockarray}\ ;
  &&\mx{T=\{1,2,3,4\},\\ N=\{5,6,7,8\};}
  &F=& 
\begin{blockarray}{rrrrr}
          &\s{r1} &\s{v2} &\s{c3} &\s{l4} \\
  \begin{block}{r[rrrr]}
   \s{R5} &  1 & -1 &  0 &  0 \\
   \s{C6} &  0 &  1 & -1 &  0 \\
   \s{L7} &  0 &  0 &  1 & -1 \\
   \s{I8} & -1 &  0 &  0 &  1 \\
  \end{block}
\end{blockarray}\ .
\end{align}\vskip-3ex\noindent
where $F$ was computed from $A$ by \rf{computeF}.
Rows of $A$, and rows and columns of $F$, are labelled by the name of the circuit element on the edge, which includes the edge number. Tree items are lowercase, cotree items are uppercase, to match the notation later.

Much reordering of variables and equations happens as the example progresses---this is just to illustrate the proof.
\apref{excode} shows \cpp code for this example, which is generated without reordering.
Its state vector $x$ is in the order of the circuit edge numbers so following the recipe in \rf{statevardef} we write
\begin{align}\label{eq:exstatevar}
  x &= [x_1,\ldots,x_8]^* = 
  \bmx{\v_{r1},& \i_{v2},& q_{c3},& \phi_{l4},& \v_{R5},& q_{C6},& \phi_{L7},& \v_{I8}}^* ,
\end{align}
where the subscript is a mnemonic for both the numeric index and the element type.

\end{example}

\subsection{Where did node potentials go?}

Common circuit models such as MNA use a grounded node at zero potential and a vector $\lam$ of node potentials at the other $\nT$ nodes, which act as Lagrange multipliers and appear as part of the vector of unknowns, now $(x,\lam)$, of the DAE.
Its Kirchoff equations are $\v + B\lam = 0$ and $B^*\i = 0$ where $B$ is the reduced incidence matrix of \ssrf{LSassumptions}.

A constant nonsingular linear transformation $\lam = Z\mu$ gives equivalent Kirchoff equations $\v + C\mu = 0$ and $C^*\i = 0$ where $C=BZ$ and $\mu$ is a vector of ``generalised potentials''.
CpH may be viewed as the transformed model that chooses $Z$ so {\em $\mu$ becomes the tree voltages} which---for a pH approach---are explicit functions of $t,x,\xp$ and needn't go in the vector of unknowns.
Either form can be converted to the other, so the DAEs of CpH and MNA are equivalent.

\section{Main results}\label{sc:mainthm}

\subsection{Statement of main theorem}

The choice of tree $T$ determines how easy it is to regularise the DAE for numerical solution.
We make a sixth assumption as in \Sch, that $T$, e.g.\ via an algorithm of Tischendorf \cite{Tischendorf99topologicalindex}, is optimal in the sense
\begin{enumerate}[\bf({A}1)]
  \setcounter{enumi}{5}
  \item $T$ has the {\em most possible CV elements} and {\em fewest possible LI elements}.
\end{enumerate}

Our aim is to prove
\begin{theorem}\label{th:main}
  If (A1)--(A5) of \ssrf{LSassumptions} and (A6) hold, then the CpH DAE is SA-amenable.
\end{theorem}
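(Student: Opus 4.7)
The plan is to mimic Lena Scholz's BOM amenability proof (\Schref{\S4.4}) but applied to the smaller, dependence-reversed CpH DAE, in three stages: first, exhibit \Smethod offsets together with a valid highest-value transversal (HVT); second, write down the resulting system Jacobian $\JS$ in block form; third, prove $\JS$ is pointwise nonsingular on the solution manifold.

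Stage one. Inspecting the constitutive equations encoded by \rf{statevardef} shows that only the capacitor charges $q$ and inductor fluxes $\phi$ ever appear differentiated in $f(t,x,\xp)=0$: $\dot q$ enters each KCL at a C-edge via $\i_C=\dot q$, and $\dot\phi$ enters each KVL at an L-edge via $\v_L=\dot\phi$. All other states ($\v$ for R or I edges, $\i$ for V edges) occur algebraically. I therefore propose $d_j=1$ on $q$- and $\phi$-indices, $d_j=0$ elsewhere, and $c_i=0$ uniformly. For the HVT I pair each twig KCL with its own twig-state when the twig type is c, v, or r, and each link KVL with its own link-state when the link type is L, R, or I; this uses the diagonal entry of $\Sigma$. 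For the residual ``wrong-side'' edges---links of type C or V, twigs of type l or i, which (A6) drives down to the topological minimum---the pairing must be routed through an off-diagonal entry of $F$. Existence of such an HVT follows by Hall's theorem from the rank conditions (A1) on $B_{\?V}$ and (A2) on $[B_{\?C}\ B_{\?L}\ B_{\?R}\ B_{\?V}]$.

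Stage two. On the HVT the diagonal-type entries of $\JS$ are the Hessians $\?C(q)$ and $\?L(\phi)$ (obtained by differentiating $\v_C=\partial H/\partial q$ and $\i_L=\partial H/\partial\phi$ with respect to $q,\phi$), the resistor Jacobian $\?G(\v)$, and identities for the source states. Off-diagonal couplings arise only from entries of $F$ or $F^*$. Permuting rows and columns by edge type---twigs before links, grouped as c, l, r, v, i and then C, L, R, V, I---puts $\JS$ into a bordered block form whose principal blocks are restrictions of $\?C,\?L,\?G$ to the edge indices of each type, together with identity blocks for sources, and whose borders are sub-minors of $F$ picking out the relevant type-pairs.

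Stage three. I would prove $\JS$ is nonsingular by a cascade of block Schur reductions: first the resistor block (invertible by (A5)), then the source blocks (invertible because (A1)--(A2) force the pertinent $F$-submatrices to be full-rank), leaving a coupled $(q,\phi)$-block. Thanks to (A6), this block splits into a generic $(\?C_T,\?L_N)$ part plus small exceptional parts $(\?C_N,\?L_T)$ from forced C-in-cotree or L-in-tree edges. Eliminating the exceptional part by a further Schur step should leave a matrix of the form $\?C_T + M^*\?L_N M$ for an incidence-type matrix $M$ built from $F$; since $\?C_T,\?L_N$ are SPD by (A3$'$)--(A4$'$) and $M^*\?L_N M$ is SPSD, the sum is SPD and hence invertible. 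The main obstacle is precisely this last reduction: one must verify that (A6)'s combinatorial minimality really forces the exceptional $F$-minors to have full row or column rank (equivalently, that the fundamental CV-loops and LI-cutsets of the optimal tree are independent), so that their Schur contribution is a congruence of an SPD Hessian and thus remains SPSD rather than corrupting positive-definiteness. Making that claim rigorous by chasing (A1)--(A2) together with (A6) through the definitions of $F$ is the technical crux; with it in hand, $\JS$ is pointwise nonsingular and the CpH DAE is SA-amenable.
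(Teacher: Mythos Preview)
Your Stage one proposes the wrong equation offsets, and this breaks everything downstream. Under (A6) the blocks $F_{Cl}$, $F_{Cr}$, $F_{Rl}$ of the loop--cutset matrix vanish (see \rf{blockF1}), so the cotree-capacitor KVL equations $f_C$ and the tree-inductor KCL equations $f_l$ are \emph{purely algebraic}: $f_C$ depends only on $(x_C,x_c)$ through $\partial H/\partial q$, and $f_l$ only on $(x_l,x_L)$ through $\partial H/\partial\phi$, with no $\dot x$ appearing at all. But every variable these rows touch has your $d_j=1$. With $c_i=0$ there, $d_j-c_i=1>0=\sigma_{ij}$ strictly at every finite position in the $f_C$ and $f_l$ rows, so no transversal can achieve equality on those rows; equivalently, the corresponding rows of $\JS$ are $\partial f_C/\partial\dot x_C$, $\partial f_C/\partial\dot x_c$, etc., which are identically zero. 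Thus your offsets are not valid \Smethod offsets, and your $\JS$ is singular, whenever $n_C+n_l>0$---precisely the index-$2$ case that matters. (A smaller slip: under (A1), (A2), (A6) there are no ``links of type V'' or ``twigs of type I''---all V-edges are twigs and all I-edges are links---so that part of your Hall-theorem pairing addresses a nonexistent situation.)

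The paper repairs this by taking $c_i=1$ on the $f_C$ and $f_l$ rows and $c_i=0$ elsewhere, so that $\sum_j d_j-\sum_i c_i=n_c+n_L$. A short combinatorial argument (\lmref{diagtransversal}) then shows every HVT is confined to three uncoupled diagonal blocks indexed by $\?C$, $\?L$, $\?G$, whence $\det\JS$ factors as the product of three block determinants. Each block has the common shape
\[
\bmx{M_{11}-N^*M_{21} & M_{12}-N^*M_{22}\\ N & \I}
\]
with $M$ one of the SPD matrices $\?C,\?L,\?G$ and $N$ a sub-block of $F$; a two-line lemma (\lmref{M_SPD1}) shows this is nonsingular for arbitrary $N$. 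No Schur cascade coupling capacitors to inductors, and no rank hypotheses on $F$-minors, are needed: the decoupling does all the work, and it comes exactly from raising those $c_i$ to $1$.
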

In terms of the \Smethod this means \SA finds a HVT giving the DAE's correct number of degrees of freedom (DOF); the structural index $\nu_s$ equals the differentiation index $\nu_d$; and we know how many times to differentiate each equation to reduce the DAE to an ODE.
The proof is in the following lemmas.
It mainly consists in finding offset vectors $c$, $d$ that lead to a nonsingular \sysJ, and showing these are in fact the canonical offsets.

\subsection{DAE in block form}

Property (A6) plus well-posedness assumptions (A1,2) imply that V elements occur {\em only} in $T$, and I elements occur {\em only} in $N$.
Hence one can sub-partition $T$ and $N$ into disjoint index sets within $1\:m$ thus:
\begin{align}\label{eq:8kinds}
\begin{aligned}
  T &= c\cup l\cup v\cup r,\\
  N &= L\cup C\cup I\cup R,
\end{aligned}
\end{align}
where set $c$ indexes the capacitors on $T$, and so on.
Referring back to the notation in the Assumptions, the names $\?C,\?L,\?V,\?I,\?R$ double up as index sets, a partition of $1\:m$. Then $c$ and $C$ are the intersections of $\?C$ with $T$ and $N$, and so on.

Matrix $F$ (modulo suitable re-ordering of the edges) then takes the block form
\begin{align}\label{eq:blockF1}
  F &=\
  \begin{array}{c@{}c}
    &\text{tree} \\
  \rotatebox[origin=c]{90}{cotree} &
  \begin{blockarray}{rccccl}
    &    c  &   l &    v &    r \\
  \begin{block}{r[cccc]l}
  L & \$Lc & \$Ll & \$Lv & \$Lr \\
  C & \$Cc &   0  & \$Cv &   0  \\
  I & \$Ic & \$Il & \$Iv & \$Ir \\
  R & \$Rc &   0  & \$Rv & \$Rr \\
  \end{block}
  \end{blockarray}
  \end{array}
\end{align}
As explained in LS (\S3.3 above eqn (13)), the blocks in positions $Cl, Cr, Rl$ must be zero because any nonzero in them lets an inductor in $T$ be swapped out or a capacitor outside $T$ swapped in, contradicting the optimality (A6).
In the Kirchoff equations write, following this partition,
\begin{align}\label{eq:vecptn1}
  \v_T &= \bmx{\v_c\\ \v_l\\ \v_v\\ \v_r} &\v_N &= \bmx{\v_L\\ \v_C\\ \v_I\\ \v_R},
    &\text{$\i_T$, $\i_N$ similarly, and }
  &&x &= \bmx{x_T\\x_N}
    = \bmx{x_c\\ x_l\\ x_v\\ x_r\\\hline x_L\\ x_C\\ x_I\\ x_R}
    = \bmx{q_c\\ \phi_l\\ \i_v\\ \v_r\\\hline \phi_L\\ q_C\\ \v_I\\ \v_R}.
\end{align}
The last column is from the pH definition of the state vector $x$, where $q_c$ is the vector of charges on the tree capacitors, $\phi_l$ is fluxes on the tree inductors, etc.

By the constitutive equations, the voltage and current vectors become functions of $x$ and $\xp$:
{\rnc\arraystretch{1.3}
\begin{align}\label{eq:videf1}
  \v &= \bmx{\v_T\\ \v_N} =
    \bmx{\dbd{H}{x_c}(x_c,x_C)\\ \xp_l\\ V(t)\\ x_r\\\hline
      \xp_L\\ \dbd{H}{x_C}(x_c,x_C)\\ x_I\\ x_R},
  &\i &= \bmx{\i_T\\ \i_N} =
    \bmx{\xp_c\\ \dbd{H}{x_l}(x_l,x_L)\\ x_v\\ g_T(x_r,x_R)\\\hline
      \dbd{H}{x_L}(x_l,x_L)\\ \xp_C\\ I(t)\\ g_N(x_r,x_R)},
\end{align}
}%
where $H$ is the Hamiltonian.
The dependences ``$(x_c,x_C)$'' etc.\ express what is implicit in assumptions (A3)--(A5): as groups, the behaviour of capacitors, inductors and resistors might depend on the state of other elements in the same group but not on other groups.

\begin{example}
In the running example, each of the categories in \rf{8kinds} has just one member:
\begin{align}
  \mx{ c & l & v & r & L & C & I & R \\\hline
      c3 &l4 &v2 &r1 &L7 &C6 &I8 &R5 }
\end{align}
The tree $T=\{1,2,3,4\}$ is optimal since the CV element outside it, $C6$, can't be swapped in without pushing out another CV element, and the LI element inside it, $L7$, can't be swapped out without pushing in another LI element.
Below is the state vector $x$ in the order of \rf{vecptn1}, and the matrix $F$ reordered to match:
\begin{align}\label{eq:vecptn2}
  &&x &= \bmx{x_T\\x_N} \quad\text{with}\quad
    x_T= \bmx{q_{c3}\\ \phi_{l4}\\ \i_{v2}\\ \v_{r1}},\quad
    x_N= \bmx{\phi_{L7}\\ q_{C6}\\ \v_{I8}\\ \v_{R5}},
  &F &=\
  \begin{blockarray}{rrrrr}
            &\s{c3}&\s{l4}&\s{v2}&\s{r1} \\ \begin{block}{r[rrrr]}
     \s{L7} &  1   & -1   &  0   &  0    \\
     \s{C6} & -1   &  0   &  1   &  0    \\
     \s{I8} &  0   &  1   &  0   & -1    \\
     \s{R5} &  0   &  0   & -1   &  1    \\
    \end{block}
  \end{blockarray}.
\end{align}
Note $F$ indeed has zeros where \rf{blockF1} says it should.
\end{example}

The DAE $f=0$ consists of \rf{CpH1} after inserting \rf{blockF1,videf1}.
Each equation of \rf{CpH1} belongs to a cutset if it is an $\i$ equation, hence to a unique twig; or a loop if a $\v$ equation, hence to a unique link.
In block form it is
\begin{align}\label{eq:DAEinorder}
  0 = f &= \bmx{f_T\\f_N} = \bmx{\i_T - F^*\i_N\\ \v_N + F\v_T}
\end{align}
where
{\rnc\arraystretch{1.2}
\begin{align}\label{eq:f_Tf_N2}
\begin{aligned}
  f_T = \bmx{f_c\\ f_l\\ f_v\\ f_r} &= \bmx{\xp_c\\ \dbd{H}{x_l}(x_l,x_L)\\ x_v\\ g_T(x_r,x_R)}
    &-& \bmx{\$Lc^* &\$Cc^* &\$Ic^* &\$Rc^*\\  \$Ll^* &0 &\$Il^* &0\\
             \$Lv^* &\$Cv^* &\$Iv^* &\$Rv^*\\  \$Lr^* &0 &\$Ir^* &\$Rr^*}
       \bmx{\dbd{H}{x_L}(x_l,x_L)\\ \xp_C\\ I(t)\\ g_N(x_r,x_R)} , \\[.5ex]
  f_N = \bmx{f_L\\ f_C\\ f_I\\ f_R} &= \bmx{\xp_L\\ \dbd{H}{x_C}(x_c,x_C)\\ x_I\\ x_R}
    &+& \bmx{\$Lc &\$Ll &\$Lv &\$Lr\\  \$Cc &0 &\$Cv &0\\
             \$Ic &\$Il &\$Iv &\$Ir\\  \$Rc &0 &\$Rv &\$Rr}
       \bmx{\dbd{H}{x_c}(x_c,x_C)\\ \xp_l\\ V(t)\\ x_r} .
\end{aligned}
\end{align}
}

Here $x_v$ and $x_I$, belonging to V and I elements, are output variables explicitly given by the $f_v$ and $f_I$ equations, occurring nowhere else. 
We can remove them from the DAE and compute them afterward, e.g.\ $f_v$ says 
$x_v =\$Lv^* \dbdt{H}{x_L}(x_l,x_L) + \$Cv^* \xp_C + \$Iv^* I(t) + \$Rv^* g_N(x_r,x_R)$.

\begin{example}
In the example, assume the components are independent and linear.
Let their names, written with a subscript, double up as their numeric value.
So for instance $c3$ contributes to $H$ a term $q_{c3}^2/2c_3$ so the relevant $H$ derivative gives an entry $\v=q/c_3$.
Thus
{\rnc\arraystretch{1.4}
\begin{align}\label{eq:videf2}
  \v &{=} \bmx{\v_T\\ \v_N},\; \i {=} \bmx{\i_T\\ \i_N}\qquad \text{with}\
  \v_T{=} \bmx{\dfrac{q_{c3}}{c_3}\\ \dot\phi_{l4}\\ v_2(t)\\ \v_{r1}},\
  \v_N{=} \bmx{\dot\phi_{L7}\\ \dfrac{q_{C6}}{C_6}\\ \v_{I8}\\ \v_{R5}};\quad
  \i_T{=} \bmx{\dot{q}_{c3}\\ \dfrac{\phi_{l4}}{l_4}\\ \i_{v2}\\ \dfrac{\v_{r1}}{r_1} },\
  \i_N{=} \bmx{\dfrac{\phi_{L7}}{L_7}\\ \dot{q}_{C6}\\ I_8(t)\\ \dfrac{\v_{R5}}{R_5} },
\end{align}
}%
and the DAE, in terms of $x$ in \rf{vecptn2}, consists of \rf{CpH1} after inserting \rf{videf2} and the reordered $F$ in \rf{vecptn2}.
\end{example}

\subsection{Signature matrix}

From \rf{f_Tf_N2} we form a schematic \sigmx $\Sigma$.
Clearly all entries $\sij$ are $\ninf$, $0$ or $1$.
The $v,I$ rows and columns have been removed, and rows and columns reordered with items of the same type together.
The $n_C$, etc.\ are block sizes. 
Provisional values of the offsets $c_i, d_j$ are added---part of the proof is to show these are in fact the canonical offsets.
Their meaning is blockwise, e.g.\ $c_i=1$ everywhere in the $f_C$ and $f_l$ rows and $c_i=0$ elsewhere.
{\rnc\![1]{\colorbox{lightgray}{$#1$}}
\begin{align}\label{eq:Sigma}
  \Sigma \ &=\
  \begin{blockarray}{rcccccclc}
        &    x_C    &    x_c     &  x_l      &   x_L      &  x_r    &   x_R   &\s{c_i}&     \\
    \begin{block}{r[cc|cc|cc]lc}
    f_C &\!{\le0}   &\!{\le0}    &  -        &  -         &    -    &    -    &\s1    & n_C \\
    f_c &\!{\le1}   &\!{\^\I{+}1}&  \le0     & \le0       &  \le0   &  \le0   &\s0    & n_c \\ \cline{2-7}        
    f_l &  -        &  -         &\!{\le0}   &\!{\le0}    &    -    &    -    &\s1    & n_l \\
    f_L & \le0      & \le0       &\!{\le1}   &\!{\^\I{+}1}&  \le0   &  \le0   &\s0    & n_L \\ \cline{2-7}        
    f_r &  -        &  -         &  \le0     &  \le0      &\!{\le0} &\!{\le0} &\s0    & n_r \\
    f_R & \le0      &  \le0      &  -        &  -         &\!{\le0} &\!{\^\I} &\s0    & n_R \\ \cline{2-7}        
    \end{block}  
 \s{d_j}&  \s1      &   \s1      &  \s1      &  \s1       &  \s0    &  \s0    &       &     \\
        &    n_C    &  n_c       &   n_l     &   n_L      &  n_r    &  n_R    &       &
  \end{blockarray}\ .
\end{align}
}%
{\em Notation.} $\I$ denotes an identity matrix of appropriate size (as ``$I$'' is already taken).
$\^\I$ denotes its \sigmx pattern, $0$ on the diagonal and $\ninf$ elsewhere.
So $\^\I+1$ (elementwise addition) is $1$ on the diagonal and $\ninf$ elsewhere.

Three key properties are as follows.
Though clear on inspecting \rf{f_Tf_N2}, P1 and P2 are not trivial---they come from the zero blocks in $F$, see \rf{blockF1}, hence from the tree being optimal (A6).
\begin{enumerate}[P1.]
  \item The only $1$'s occur in the $cC$, $cc$, $lL$ and $LL$ blocks.
  \item All entries of rows $f_C$ [resp.\  $f_l$] are $\ninf$ outside the $CC$ and $Cc$ [resp.\ $ll$ and $lL$] blocks.
  \item The $cc$ and $LL$ blocks both hold $\^\I+1$.
\end{enumerate}
\medskip

From \rf{Sigma}, if the provisional offsets are correct, the number of \dof is
\begin{align}\label{eq:DOFfmla}
  \DOF = \sum_j d_j - \sum_i c_i = (n_C+n_c+n_l+n_L) - (n_C+n_l) = n_c+n_L.
\end{align}

\begin{example}
Below is shown the $\Sigma$ for the example, with the notation for the state vector used in \rf{vecptn2,videf2}.
The voltage and current source rows and columns have not been removed, to show their status as output variables (a $0$ in the $(v_2,v_2)$ and $(I_8,I_8)$ positions and nothing else in the column).
Alongside it are the actual equations, in the same notation.
{\rnc\![1]{\colorbox{lightgray}{$#1$}}
\begin{align}\label{eq:Sigma2}
  \Sigma &=
  \raisebox{-1.2ex}
  {$\begin{blockarray}{rccccccccl}
           & q_{C6}    & q_{c3}     & \phi_{l4} & \phi_{L7}  & \v_{r1} & \v_{R5} & \i_{v2} & \v_{I8} &\s{c_i}&     \\
  \begin{block}{c[cc|cc|cc|cc]l}
    f_{C6} &\!{ 0  }   &\!{ 0 }     &  -        &  -         &    -    &    -    &    -    &    -    &\s1 \\
    f_{c3} &\!{ 1  }   &\!{ 1 }     &  -        &  0         &    -    &    -    &    -    &    -    &\s0 \\ \cline{2-9}        
    f_{l4} &  -        &  -         &\!{ 0  }   &\!{ 0  }    &    -    &    -    &    -    &    -    &\s1 \\
    f_{L7} &  -        &  0         &\!{ 1  }   &\!{ 1  }    &    -    &    -    &    -    &    -    &\s0 \\ \cline{2-9}        
    f_{r1} &  -        &  -         &   -       &   -        &\!{ 0  } &\!{ 0  } &    -    &    -    &\s0 \\
    f_{R5} &  -        &  -         &   -       &   -        &\!{ 0  } &\!{ 0  } &    -    &    -    &\s0 \\ \cline{2-9}        
    f_{v2} &  1        &  -         &   -       &   -        &    -    &    0    &  \!0    &    -    &\s0 \\
    f_{I8} &  -        &  -         &   1       &   -        &    0    &    -    &    -    &  \!0    &\s0 \\ \cline{2-9}
  \end{block}  
    \s{d_j}&  \s1      &   \s1      &  \s1      &  \s1       &  \s0    &  \s0    &  \s0    &  \s0    &       &     
  \end{blockarray}
  $},\quad
  {\rnc\arraystretch{1.13}
  \left\{\begin{array}{r@{\;=}c@{}c}
   -q_{C6}/C_6    & -q_{c3}/c_3    &+ v_2(t) \\
    \dot{q}_{c3}  &  \phi_{L7}/L_7 &- \dot{q}_{C6} \\
    \phi_{l4}/l_4 & -\phi_{L7}/L_7 &+ I_8(t) \\
   -\dot\phi_{L7} &  q_{c3}/c_3    &- \dot\phi_{l4} \\
    \v_{r1}/r_1   & -I_8(t)        &+ \v_{R5}/R_5 \\
   -\v_{R5}       & -v_2(t)        &+ \v_{r1} \\
    \i_{v2}       & \dot{q}_{C6}   &- \v_{R5}/R_5 \\
    \v_{I8}       & -\dot\phi_{l4} &+ \v_{r1}
  \end{array}\right.
  }
\end{align}
}%
Comparing with \fgref{CpH8by5ex} one can verify that each equation is KCL applied to a cutset, or KVL applied to a loop, of the tree $T$.
\end{example}

\subsection{Supporting lemmas}

The proof of \thref{main} uses several lemmas.
\begin{lemma}\label{lm:diagtransversal}
  No transversal $\calT$ of $\Sigma$ can have a value greater than that in \rf{DOFfmla}, and any $\calT$ that achieves this value lies within the diagonal blocks (shaded in \rf{Sigma}).
\end{lemma}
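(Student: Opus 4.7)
The plan is to exploit the three structural properties P1, P2, P3 to reduce the transversal bound to a short counting/matching argument; no use of (A3)--(A5) is needed at this stage (those enter later, when proving the Jacobian is actually nonsingular on the diagonal).

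First I would observe that every finite entry $\sij$ of $\Sigma$ takes a value in $\{0,1\}$ (and the rest are $\ninf$), so for any valid---that is, $\ninf$-free---transversal $\calT$ the value $\sum_{(i,j)\in\calT} \sij$ equals the number of $1$-entries that $\calT$ hits. By property P1, the $1$-entries of $\Sigma$ live only in the blocks $cC$, $cc$, $lL$, $LL$, hence only in rows indexed by $f_c$ and $f_L$. Any transversal thus contains at most $n_c + n_L$ ones, which is exactly the value predicted by \rf{DOFfmla}.

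Next, to prove the second assertion, suppose $\calT$ achieves the maximum $n_c + n_L$. Then every one of the $n_c$ rows of $f_c$ and every one of the $n_L$ rows of $f_L$ in $\calT$ must be matched to a column giving a $1$; by P1 this forces each $f_c$-row to be matched in $x_C \cup x_c$ and each $f_L$-row to be matched in $x_l \cup x_L$. Now I would invoke property P2: rows of $f_C$ have only non-$\ninf$ entries in columns $x_C \cup x_c$, and rows of $f_l$ only in columns $x_l \cup x_L$, so in any valid transversal these rows must already sit in those column groups. Combining the two, all $n_C + n_c$ rows of $f_C \cup f_c$ are matched inside the $n_C + n_c$ columns $x_C \cup x_c$, using each such column exactly once; likewise all $n_l + n_L$ rows of $f_l \cup f_L$ are matched bijectively with $x_l \cup x_L$. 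What is left is forced: rows $f_r \cup f_R$ must be matched with the remaining columns $x_r \cup x_R$. Hence $\calT$ is supported entirely on the three shaded diagonal super-blocks, as claimed.

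The main obstacle is essentially cosmetic: everything hinges on reading the $\{\ninf, 0, 1\}$ pattern of $\Sigma$ correctly, and on pinning down that ``value of transversal = number of $1$'s hit''. The counting itself is forced once P1 caps the $1$-supply at $n_c + n_L$ and P2 traps rows $f_C$, $f_l$ inside their own diagonal super-blocks; property P3 and the SPD hypotheses (A3)--(A5) play no role here and will only be needed in the subsequent lemma that shows such a diagonal transversal does exist and produces a nonsingular \sysJ.
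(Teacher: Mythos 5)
Your proof is correct and follows essentially the same route as the paper's: bound the value by noting that $1$'s occur only in the $f_c$ and $f_L$ rows (P1), then use P2 to trap the $f_C$ and $f_l$ rows in the same column groups, so that the columns $x_C,x_c,x_l,x_L$ are exhausted and the $f_r,f_R$ rows are forced into $x_r,x_R$. No gaps; your remark that P3 and (A3)--(A5) are not needed here also matches the paper's structure.
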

\begin{proof}
  All $\sij\le0$ except for the $n_c{+}n_L$ rows $f_c$ and $f_L$, each having at least one $1$. So $\calT$ cannot do better than use a $1$ in each of these rows, hence $\val\calT \le n_c+n_L$, proving the first half.
  
  For the second half, to give $\calT$ the needed number of $1$'s, all its $n_c$ entries in the $f_c$ rows must be in columns $x_C$ or $x_c$.
  Its $n_C$ entries in the $f_C$ rows are in the same columns because the rest of these rows is $\ninf$.
  Similarly all $\calT$'s entries in the $f_L$ and $f_l$ rows are in columns $x_l$ or $x_L$.
  
  This uses up all the $x_C,x_c,x_l,x_L$ columns and forces $\calT$'s entries in the $f_r,f_R$ rows into the $x_r,x_R$ columns, since there is nowhere else left.
  Hence any $\calT$ with value $n_c+n_L$ lies within the diagonal blocks as asserted.
\end{proof}

We need a result about SPD matrices, similar to Lemma 20 in LS.
\begin{lemma}\label{lm:M_SPD1}
  If $M = \smallbmx{M_{11} &M_{12} \\ M_{21} &M_{22}}
  \smallmx{n_1\\ n_2}$ is SPD then
  \begin{align}
    P &= \raisebox{-2.5ex}{$ 
  \begin{blockarray}{ccl}
  \begin{block}{[cc]l}
  M_{11} - N^* M_{21}, &M_{12} - N^* M_{22} &\s{n_1} \\
  N, &\I &\s{n_2} \\[-.5ex]
  \end{block}
  \s{n_1} & \s{n_2}
  \end{blockarray}
  $}
  \end{align}\vskip-3ex
  \noindent is nonsingular for an arbitrary $n_2\x n_1$ matrix $N$.
\end{lemma}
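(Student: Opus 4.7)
My plan is to reduce nonsingularity of $P$ to a standard Schur-complement identity, and then recognise the resulting Schur complement as a congruence transform of the SPD matrix $M$.

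Because the $(2,2)$ block of $P$ is the identity, the block-determinant formula applies directly and gives
\begin{align}
  \det P \;=\; \det\bigl((M_{11} - N^* M_{21}) - (M_{12} - N^* M_{22})\,N\bigr)
         \;=\; \det\bigl(M_{11} - M_{12} N - N^* M_{21} + N^* M_{22} N\bigr).
\end{align}
The first step of the plan is just this algebraic simplification; no subtlety here beyond expanding the Schur complement of $P$ against its $I$-block.

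The second and key step is to recognise the right-hand side as $X^* M X$ for
\begin{align}
  X = \bmx{\I \\ -N},
\end{align}
a block $(n_1{+}n_2)\x n_1$ matrix. Indeed a direct multiplication gives $M X = \smallbmx{M_{11}-M_{12}N \\ M_{21}-M_{22}N}$ and then $X^* M X = M_{11} - M_{12} N - N^*M_{21} + N^* M_{22} N$, matching the Schur complement exactly. Because the top block of $X$ is $\I$, $X$ has full column rank $n_1$. Hence for any nonzero $y\in\R^{n_1}$ the vector $Xy$ is nonzero, and SPDness of $M$ yields $y^*(X^*MX)y = (Xy)^* M (Xy) > 0$. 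So $X^*MX$ is SPD, in particular nonsingular, and $\det P \ne 0$.

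I do not anticipate any real obstacle: the only thing to watch is that one must verify that the Schur complement coming out of $\det P$ is genuinely the quadratic form $X^*MX$ (and not some asymmetric variant), which is immediate from the identification above. The lemma then follows with no further assumption on $N$, as stated.
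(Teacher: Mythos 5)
Your proof is correct and rests on the same key identification as the paper's: the congruence $X^*MX$ with $X=\smallbmx{\I\\-N}$, which is SPD (hence nonsingular) because $M$ is SPD and $X$ has full column rank. The only difference is packaging—you extract this matrix as the Schur complement of $P$ against its identity block and conclude $\det P\ne0$, whereas the paper eliminates $v=-Nu$ from $Pw=0$ and concludes $w=0$; the underlying computation is identical.
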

\begin{proof}
Given $w = \bmx{u\\v}\ \mx{n_1\\ n_2}$ we need to show $Pw=0$ implies $w=0$.
So suppose $\bmx{x\\y} = Pw = 0$. In particular $0=y=Nu+v$ so $v = -Nu$.
Substituting this into the expression for $x$ gives
\[
  0 = x = Y^* \bmx{M_{11} &M_{12} \\ M_{21} &M_{22}} Yu = Y^* M Yu
    \qquad\text{where } Y = \bmx{\I \\ -N} \ \mx{n_1\\ n_2}.
\]
Premultiply by $u^*$, then $(Yu)^* M (Yu) = 0$.
As $M$ is SPD, $0 = Yu = \bmx{u\\-Nu} = \bmx{u\\v} = w$.
\end{proof}

In preparation for the next result, write the SPD matrices of Assumptions (A3)--(A5) in block form matching \rf{Sigma} as follows.
(As in \rf{Sigma}, for capacitors, cotree items come first; for inductors and resistors, tree items do.)
\begin{align}\label{eq:CLGmatrix1}
   \?C &= \bmx{\?C_{CC} &\?C_{Cc} \\ \?C_{cC} &\?C_{cc}}\ \mx{n_C\\ n_c}\,,
  &\?L &= \bmx{\?L_{ll} &\?L_{lL} \\ \?L_{Ll} &\?L_{LL}}\ \mx{n_l\\ n_L}\,,
  &\?G &= \bmx{\?G_{rr} &\?G_{rR} \\ \?G_{Rr} &\?G_{RR}}\ \mx{n_r\\ n_R}\,.
\end{align}

\begin{lemma}\label{lm:nonsingJ}
  The \sysJ $\_J$ defined by the provisional offsets is nonsingular.
\end{lemma}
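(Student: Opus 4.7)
The plan is to show that after grouping equations and variables into the three type-blocks $[Cc]$, $[lL]$, $[rR]$, the \sysJ $\_J$ is block upper triangular, with each diagonal block matching the form of $P$ in \lmref{M_SPD1} for an SPD matrix supplied by (A3$'$)--(A5). The determinant then factors into three nonzero pieces.

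For the block-triangular structure I would go meta-block by meta-block, using $J_{ij}=\partial f_i/\partial x_j^{(d_j-c_i)}$ when $\sigma_{ij}=d_j-c_i$ and $0$ otherwise. Each of the three below-diagonal meta-blocks ($[lL]\times[Cc]$, $[rR]\times[Cc]$, $[rR]\times[lL]$) vanishes for one of two reasons: either the corresponding $\sigma_{ij}$ is $-\infty$ (the dashes in \rf{Sigma}, arising from the zero blocks of $F$ forced by tree optimality (A6), together with property P2), or $\sigma_{ij}\le 0$ while $d_j-c_i=1$, so the offsets render the entry ineligible. The above-diagonal meta-blocks $[Cc]\times[rR]$ and $[lL]\times[rR]$ may contain nonzero entries (e.g.\ $\partial f_c/\partial x_r=-F_{Rc}^*\,\?G_{Rr}$ or $\partial f_L/\partial x_r=F_{Lr}$) but they do not affect the determinant.

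Next, I would read off each diagonal block from \rf{f_Tf_N2}, using the zero blocks of $F$ to kill spurious terms. The $[Cc]$ block becomes
\[
  \_J_1=\bmx{\?C_{CC}+F_{Cc}\?C_{cC} & \?C_{Cc}+F_{Cc}\?C_{cc}\\ -F_{Cc}^* & \I},
\]
which is exactly the template for $P$ in \lmref{M_SPD1} with $N=-F_{Cc}^*$ and $M=\?C$; the $[lL]$ and $[rR]$ diagonal blocks emerge in the same way with $(N,M)=(F_{Ll},\?L)$ and $(F_{Rr},\?G)$. Assumptions (A3$'$)--(A5) make each $M$ pointwise SPD, so \lmref{M_SPD1} yields nonsingularity of each diagonal block and hence of $\_J$.

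The main obstacle is the combinatorial bookkeeping in the first step: one must check every $\Sigma$ entry against $d_j-c_i$ and keep track of which blocks of $F$ are forced to zero by (A6). Once the below-diagonal meta-blocks are certified zero, the identification of each diagonal block with the template in \lmref{M_SPD1} is a short computation.
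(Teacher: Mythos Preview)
Your proof is correct and very close to the paper's; the only genuine difference is how you justify that $\det(\_J)$ factors over the three diagonal blocks. You establish this by verifying entry-by-entry that the three below-diagonal meta-blocks of $\_J$ vanish (either $\sigma_{ij}=\ninf$, or $\sigma_{ij}\le0<d_j-c_i=1$), so that $\_J$ is block upper triangular in the order $[Cc],[lL],[rR]$. The paper instead invokes \lmref{diagtransversal}: every nonzero term in the Leibniz expansion of $\det(\_J)$ comes from a transversal of $\Sigma$ with value $n_c+n_L$, and that lemma forces all such transversals into the shaded diagonal blocks, so off-diagonal entries of $\_J$---which the paper explicitly allows to be nonzero---cannot contribute. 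From that point on the two arguments coincide: the computation of $\_J_{\?C},\_J_{\?L},\_J_{\?G}$ from \rf{f_Tf_N2} and the identification of each with the template $P$ of \lmref{M_SPD1}, using $(M,N)=(\?C,-\$Cc^*),(\?L,\$Ll),(\?G,\$Rr)$ and assumptions (A3$'$)--(A5), are exactly as in the paper. Your route is a bit more elementary and self-contained, bypassing the combinatorial lemma; the paper's route avoids the case-by-case offset check and makes clearer, without inspecting individual entries, why the above-diagonal nonzeros you mention are harmless.
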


\begin{proof}
$\_J$ is defined by
\begin{align}\label{eq:sysJdef}
\_J_{ij} = 
\begin{cases}
  \dbdt{f_i}{x_j^{(d_j-c_i)}} &\text{where $d_j-c_i\ge0$}, \\
   0 &\text{elsewhere}.
\end{cases}
\end{align}
We prove nonsingularity by showing $\det(\_J)\ne0$.
Now $\det(\_J)$ is a sum of ($\pm$) products of matrix entries on a transversal, taken over all transversals of $\_J$---where the latter means by definition a transversal $\calT$ such that $\_J_{ij}\ne0$ for all $(i,j)\in\calT$.

For any such $\calT$, the definition of \sigmx and \rf{sysJdef} imply that $d_j-c_i = \sij$ holds everywhere on $\calT$, hence 
$\val\calT =  \sum_{(i,j)\in\calT} \sij = \sum_j d_j - \sum_i c_i = n_c+n_L$.
Hence, by \lmref{diagtransversal}, all transversals of $\_J$ lie within the shaded diagonal blocks in \rf{Sigma}.

Hence $\det(\_J)$ is the product of the determinants of the corresponding diagonal blocks of $\_J$, call them $\_J_{\?C}$, $\_J_{\?L}$ and $\_J_{\?G}$.
(There are usually many nonzeros of $\_J$ outside the diagonal blocks, but this shows they cannot contribute to $\det(\_J)$.)

\para{The $\_J_{\?C}$ block}
Expanding the relevant components of \rf{f_Tf_N2} gives
\begin{align}
  f_C &= \dbd{H}{x_C}(x_c,x_C) &+& 
         \left( \$Cc \dbd{H}{x_c}(x_c,x_C) + \cancel{\$Cv V(t)} \right) ,\\
  f_c &= \xp_c &-& 
         \left(\cancel{\$Lc^* \dbd{H}{x_L}(x_l,x_L)} + \$Cc^* \xp_C +
         \cancel{\$Lc^* I(t)} + \cancel{\$Rc^* g_N(x_r,x_R)} \right),
\end{align}
where terms with no dependence on $(x_C,x_c)$ are struck out.
Now $d_j-c_i$ equals $0$ in the $f_C$ rows and $1$ in the $f_c$ rows so by \rf{sysJdef}
\begin{align}\label{eq:JCblock}
  \_J_{\?C} &= \bmx{\ds\dbd{f_C}{x_C}& \ds\dbd{f_C}{x_c}\\ \ds\dbd{f_c}{\xp_C}& \ds\dbd{f_c}{\xp_c}} 
   = \bmx{\ds\dbd{^2 H}{x_C^2} + \$Cc \ddbd{H}{x_C}{x_c}, & \ds\ddbd{H}{x_c}{x_C} + \$Cc \dbd{^2 H}{x_c^2} \\[3ex]
  -\$Cc^*, & \I} \\
  &= \bmx{\?C_{CC} + \$Cc \?C_{cC}, & \?C_{Cc} + \$Cc \?C_{cc} \\[1ex]
  -\$Cc^*, & \I}, \qquad\text{with $\?C$ from \rf{CLGmatrix1}}.
\end{align}
Setting $M=\?C$, which is SPD by (A3), and $N=-\$Cc^*$ in \lmref{M_SPD1} shows $\_J_{\?C}$ is nonsingular.

\para{The $\_J_{\?L}$ block}
Working as in the previous paragraph we find
\begin{align}\label{eq:JLblock}
  \_J_{\?L} &= \bmx{\ds\dbd{f_l}{x_l}& \ds\dbd{f_l}{x_L}\\ \ds\dbd{f_L}{\xp_l}& \ds\dbd{f_L}{\xp_L}}
  = \bmx{\?L_{ll} - \$Ll^* \?L_{Ll}, & \?L_{lL} - \$Ll^* \?L_{LL} \\[1ex]
  \$Ll, & \I}.
\end{align}
Setting $M=\?L$, which is SPD by (A4), and $N=\$Ll$ in \lmref{M_SPD1} shows $\_J_{\?L}$ is nonsingular.

\para{The $\_J_{\?G}$ block}
Working as before we find
\begin{align}\label{eq:JGblock}
  \_J_{\?G} &= \bmx{\ds\dbd{f_r}{x_r}& \ds\dbd{f_r}{x_R}\\ \ds\dbd{f_R}{\xp_r}& \ds\dbd{f_R}{\xp_R}}
  = \bmx{\?G_{rr} - \$Rr^* \?G_{Rr}, & \?G_{rR} - \$Rr^* \?G_{RR} \\[1ex]
  \$Rr, & \I}.
\end{align}
Setting $M=\?G$, which is SPD by (A5), and $N=\$Rr$ in \lmref{M_SPD1} shows $\_J_{\?G}$ is nonsingular.

We saw above that $\det(\_J) = \det(\_J_{\?C}) \det(\_J_{\?L}) \det(\_J_{\?G})$.
We have now shown this is nonzero, completing the proof.
\end{proof}

Up to now it might even have been possible that the DAE is structurally ill-posed, i.e.\ that $\Sigma$ has no finite transversal.
That this cannot happen is due to assumptions (A3)--(A5), which were deployed in the previous lemma.
\begin{lemma}\label{lm:cidj_correct}
  The DAE is structurally well-posed; $c_i$, $d_j$ shown in \rf{Sigma} are its canonical offsets.
\end{lemma}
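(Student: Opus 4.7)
The plan is to verify that the provisional $(c,d)$ in \rf{Sigma} are nonnegative, valid ($d_j-c_i\ge\sigma_{ij}$), and tight on some HVT; by uniqueness of canonical offsets this pins $(c,d)$ down as canonical, while the existence of a finite HVT simultaneously gives structural well-posedness.

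Well-posedness will come from \lmref{nonsingJ}: since $\det\_J\ne 0$ is a signed sum over $\_J$-transversals whose entries are all nonzero, at least one such transversal exists and is automatically a finite transversal of $\Sigma$. By \lmref{diagtransversal} it sits in the shaded diagonal blocks of \rf{Sigma} and has value $n_c+n_L$, so $\val\Sigma = n_c+n_L$. Validity of $(c,d)$ is then a direct block-by-block inspection: on the $cC,cc,lL,LL$ blocks $d_j-c_i=1\ge\sigma_{ij}$ (all $\sigma_{ij}\le 1$ there), and elsewhere $d_j-c_i\ge 0\ge\sigma_{ij}$. Tightness on the diagonal HVT from the well-posedness step follows because each $f_c$ or $f_L$ row picks a $1$ on the $cc$ or $LL$ diagonal (giving $d_j-c_i=\sigma_{ij}=1$) and each $f_C,f_l,f_r,f_R$ row picks a $0$ on the $CC,ll,rr,RR$ diagonal (giving $d_j-c_i=\sigma_{ij}=0$); summed, $\sum_j d_j-\sum_i c_i = n_c+n_L = \val\Sigma$, matching \rf{DOFfmla}.

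The main obstacle is ensuring that the diagonal HVT really lands on finite (value $0$, not $\ninf$) entries inside the $CC,ll,rr,RR$ blocks whose \rf{Sigma} pattern is only ``$\le 0$''. This is where \lmref{nonsingJ} does the essential work: the nonzero determinants of $\_J_\?C,\_J_\?L,\_J_\?G$, obtained via \lmref{M_SPD1} from the SPD assumptions (A3)--(A5), force perfect matchings of nonzero entries inside each leading sub-block, which lift back to the finite diagonal entries needed for tightness and so seal the identification of $(c,d)$ with the canonical offsets.
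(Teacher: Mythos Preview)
Your strategy—derive well-posedness from the nonsingular $\_J$, verify $d_j-c_i\ge\sigma_{ij}$ blockwise, and exhibit an HVT on which equality holds—mirrors the paper's. But the inference ``by uniqueness of canonical offsets this pins $(c,d)$ down as canonical'' is a genuine gap. Valid offset pairs are \emph{not} unique: adding the same nonnegative integer to every $c_i$ and $d_j$ preserves both validity and HVT-tightness, and subtler non-minimal valid pairs exist as well. The canonical offsets are by definition the componentwise \emph{smallest} nonnegative valid pair, so validity-plus-tightness alone does not single them out; you still owe a minimality argument. The paper supplies this differently: having identified the $\_J$-transversal $\calT$ and the values of $\sigma_{ij}$ on it, it invokes the $\_c,\_d$ iteration of \cite[Algorithm~3.1]{Pryce2001a} (start at $c=0$; alternately set $d_j\leftarrow\max_i(\sigma_{ij}+c_i)$ and $c_i\leftarrow d_{\calT(i)}-\sigma_{i,\calT(i)}$) and notes that the process stabilises after one pass at the displayed values. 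Since that algorithm is guaranteed to output the canonical offsets, this closes the gap that your ``uniqueness'' appeal leaves open.

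A secondary remark: your second and third paragraphs conflate the $\_J$-transversal (which by \lmref{diagtransversal} merely lies in the shaded diagonal \emph{blocks}) with the literal diagonal of $\Sigma$. You do not need to argue separately that the $CC$, $ll$, $rr$ sub-block diagonals carry $\sigma=0$. On any $(i,j)$ in a $\_J$-transversal, $\_J_{ij}\ne0$ already forces $\sigma_{ij}\ge d_j-c_i$, while validity gives $\sigma_{ij}\le d_j-c_i$; hence $d_j-c_i=\sigma_{ij}$ holds automatically. This is how the paper argues, and it sidesteps your third-paragraph concern entirely.
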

\begin{proof}
From \lmref{nonsingJ} at least one transversal $\calT$ of $\_J$ exists, which is therefore a finite transversal of $\Sigma$.
By definition this means the DAE is structurally well-posed.
By \lmref{nonsingJ} $\calT$ has value $n_c+n_L$ so by \lmref{diagtransversal} it lies within the diagonal blocks.
Since the only $1$'s of $\Sigma$ in these blocks are in the $f_c$ and $f_L$ rows, every $(i,j)$ of $\calT$ in these rows must have $\sij=1$.
All other $(i,j)$ of $\calT$ have $\sij=0$.

The canonical offsets are defined to be the smallest numbers $\ge0$ satisfying
\begin{align}\label{eq:cidjtest}
  \text{$d_j-c_i\ge\sij$ everywhere with equality on some transversal of $\Sigma$}.
\end{align}
They may be computed by the ``$\_c,\_d$ iteration'', \cite[Algorithm 3.1]{Pryce2001a}.
Briefly this says: start with all $c_i=0$. (a) Set the $d_j$ to the smallest values that make $d_j-c_i\ge\sij$ everywhere.
(b) Set $c_i$ to the unique values that make $d_j-c_i=\sij$ on $\calT$. Repeat from (a) until nothing changes.
 It is easily seen that, applied here, this terminates after one iteration with the indicated offsets.
\end{proof}

\subsection{Main result}

\begin{proof}[Proof of \thref{main}]
  It is optional whether we include the output variables $x_v,x_I$ as unknowns in the DAE; they do not affect whether it is SA-amenable.
  For the version that omits them, and has \sigmx \rf{Sigma}, the above lemmas have shown (a) the Jacobian defined using certain offsets is nonsingular, (b) these offsets are the canonical ones.
  Hence the DAE is SA-amenable.
\end{proof}

\section{Consequences}

\subsection{Index}

\begin{corollary}\label{co:indexformula}
  Let $A$ mean ``all capacitors are in the optimal tree and all inductors outside it'' and $B$ mean ``there are no resistors''.
  Then the differentiation index $\nu_d$ is $0$ if both $A$ and $B$ are true, $1$ if just one of them is false, and $2$ if both are false.
\end{corollary}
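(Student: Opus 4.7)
The plan is to read off the differentiation index directly from the canonical offsets in the tableau \rf{Sigma} (declared canonical in \lmref{cidj_correct}) and invoke \thref{main} to identify $\nu_s$ with $\nu_d$. Concretely, SA-amenability gives $\nu_d=\nu_s$, and for the $\Sigma$-method the structural index is
\[
  \nu_s \;=\; \max_i c_i \;+\; \begin{cases} 1, & \text{if } \min_j d_j = 0,\\ 0, & \text{otherwise,}\end{cases}
\]
so it suffices to inspect what rows/columns of \rf{Sigma} are present.

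From \rf{Sigma}, the only rows with $c_i=1$ are $f_C$ and $f_l$ (all others have $c_i=0$), and the only columns with $d_j=0$ are $x_r$ and $x_R$ (all others have $d_j=1$). Hence $\max_i c_i = 1$ iff at least one $f_C$ or $f_l$ row exists, i.e.\ iff $n_C+n_l>0$; and $\min_j d_j=0$ iff at least one $x_r$ or $x_R$ column exists, i.e.\ iff $n_r+n_R>0$.

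Now translate the hypotheses. Condition $A$ says every capacitor sits in $T$ and every inductor sits in $N$, which is exactly $n_C=n_l=0$, equivalently $\max_i c_i=0$. Condition $B$ says there are no resistors, which is exactly $n_r=n_R=0$, equivalently $\min_j d_j=1$. Substituting into the boxed formula for the four combinations $(A,B)$, $(A,\neg B)$, $(\neg A,B)$, $(\neg A,\neg B)$ gives $\nu_d=0,\,1,\,1,\,2$ respectively, which is the corollary's claim.

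There is no substantive obstacle: the whole argument is a look-up on \rf{Sigma} followed by a short case analysis. The only mild care needed is with degenerate circuits (e.g.\ a purely resistive network, where the $f_C,f_l$ rows \emph{and} the $x_c,x_C,x_l,x_L$ columns may all be absent); the max/min are then read off the remaining rows and columns, and the formula still delivers the stated index.
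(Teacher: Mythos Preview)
Your proof is correct and follows essentially the same approach as the paper's: invoke SA-amenability so that $\nu_d=\nu_s$, use the standard formula $\nu_s=\max_i c_i + [\min_j d_j=0]$, then read off from \rf{Sigma} that $\max_i c_i>0$ iff $n_C+n_l>0$ (i.e.\ $\neg A$) and $\min_j d_j=0$ iff $n_r+n_R>0$ (i.e.\ $\neg B$). The paper states this more tersely but the logic is identical; your extra remark on degenerate cases is a harmless elaboration.
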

\begin{proof}
  For an SA-amenable DAE, $\nu_d$ is the largest $c_i$, plus $1$ if any $d_j$ is zero.
  $A$ is equivalent to $n_C=n_l=0$, hence to $\max_i c_i=0$ because there are no $f_C$ or $f_l$ equations; $B$ is equivalent to $n_r=n_R=0$, hence to no $d_j$ being zero.
  The result follows at once.
\end{proof}
This accords with the Tischendorf index theorem \cite{Tischendorf99topologicalindex} for the MNA model.

\subsection{Numerical solution}

\subsubsection{By dummy derivatives and a DASSL-style solver}

DAE index reduction consists in appending to the system the $t$-derivatives of some equations to get a larger system that can be solved to give a system of lower index.

Dummy Derivatives (DDs) \cite{Matt93a} is a structural/numerical way to choose some derivatives $x_j^{(q)}$ of the variables $x_j$ to treat as {\em algebraic} unknowns, removing the seeming over-determinedness in the above recipe.
\cite{Pryce2015DDsnew} gives a \Smethod perspective on the process.
The choice depends on Jacobian condition numbers, so in general may change (dummy pivoting) during numerical solution.
In simple cases such as constant-coefficient linear it may be made once for all beforehand.

We summarise DDs for the index 2 case here, so $n_C+n_l>0$ in \rf{Sigma}---some $f_C$ or some $f_l$ equations exist.
They have $c_i=1$ so differentiate each one once.
By the block structure in \rf{Sigma} and hence in $\_J$ we can handle the C and the L elements separately.

As $\_J_{\?C}$ in \rf{JCblock} is nonsingular its upper part, rows $f_C$, columns $x_C,x_c$ of \rf{Sigma}, has full row rank $n_C$.
Choose $n_C$ linearly independent columns of this part, which might come from $x_C$ or $x_c$ columns or both.
They form an $n_C{\x}n_C$ nonsingular matrix $K_{\?C}$. Let the corresponding variables form vector $y_{\?C}$ and the remaining $n_c$ variables in these columns form vector $s_{\?C}$.
By the Implicit Function Theorem we can solve $f_C=0$ for $y_{\?C}$ as a function of $s_{\?C}$, say $y_{\?C} = \psi_{\?C}(s_{\?C})$.

Similarly, picking out an $n_l{\x}n_l$ nonsingular part $K_{\?L}$ of the upper part of Jacobian $\_J_{\?L}$, we solve for $n_l$ of the $(x_l,x_L)$ variables as a function of the remaining $n_L$, say $y_{\?L} = \psi_{\?L}(s_{\?L})$.

Then the $(n_C+n_l)$ variables $s = [s_{\?C};\, s_{\?L}]$ are the {DDs state vector}, such that the whole DAE is equivalent to a first-order implicit ODE $F(t,s,\dot{s})=0$.
See \cite{Pryce2015DDsnew} for how each $F$-evaluation, in the linear case, entails one linear solve with each of $K_{\?C}$, $K_{\?L}$ and the whole \sysJ $\_J$.

\subsubsection{By \daets}

The \daets high-index DAE code \cite{nedialkov2008solving}, written in \cpp, solves this kind of DAE directly.
Being based on automatic differentiation it accepts the DAE in the form \rf{CpH1,videf1} with little change.
It has solved many small circuits in CpH form but we have not so far tested its performance on large circuits.

\subsubsection{By reduction to explicit ODE}

Using DDs it is easy to go beyond the implicit ODE form to cast the system as an explicit ODE $\dot{s} = f(t,s)$.
It is explained in \cite{Pryce2015DDsnew} why this might be desirable, e.g.\ for storage reasons, when the number of DOF is small compared with the total number of variables.
For instance the ESI/CyDesign simulation software, mainly used for mechanical systems, takes this approach.
The NAG Library in 2015 provided a ``reverse-communication'' Runge--Kutta solver for precisely such applications.

\section{Conclusions}

CpH is a simple method of formulating a DAE for an RLC circuit.
Like that of the Branch-Oriented Method and unlike that of Modified Nodal Analysis the DAE is always SA-amenable.

It is smaller than either thanks to following \pH practice, namely for charge in a capacitor and flux linkage in an inductor to be independent rather than dependent variables.
With $m$ edges and $n$ nodes in the circuit graph, the DAE has size around $2m$ for BOM, around $m+n$ for MNA, and around $m$ for CpH.

Our proof relies on the same well-posedness assumptions as those of Scholz in her 2017 study of \SA applied to various circuit models.
Our arguments draw heavily on her BOM analysis but are simpler because of the smaller size of the system.

We have simulated a large number of small circuits and confirmed using the \daets solver that it is easy to convert a circuit specification automatically to code using the CpH model.

\appendix
\small
\section{\cpp code for the running example}\label{sc:excode}

This presents the DAE of the example in a form accepted by the Nedialkov--Pryce solver \daets.

\para{Notes}
\begin{enumerate}[1.]
  \item The function is templated on a type \li{T}, which \daets instantiates several times with discrete types to perform structural analysis, and then with a numeric type to do the integration.
  \item In lines 4--11, the circuit elements have been given values by the code generation process, to create a self-contained piece of code that can be run by \daets.
  Numeric items are given pseudo-random values in $(0,1)$; voltage and current sources are set to an anonymous function chosen at random from a small gallery of functions of $t$. The user can adjust these.
  \item The state vector $x$ is as in \rf{exstatevar}.
  Lines 14--21 set the vectors $\v,\i$ in terms of it, again ordered by edge numbers.
  \item Lines 23--30 apply the Dirac structure, to form the residuals of the DAE.
  Again these are done in order of edge number---it just happens for this example that tree edges (KCL) come first followed by cotree edges (KVL).
\end{enumerate}
\begin{lstlisting}[language=C++,basicstyle=\footnotesize]
template <typename T>
void fcn(T t, const T *x, T *f, void *param) {
  // Function to specify circuit P8by5 for DAETS
  const double R1 = 0.8666;
  const auto V2 = [](T t) -> T {return cos(t);};
  const double C3 = 0.50689;
  const double L4 = 0.91901;
  const double R5 = 0.58256;
  const double C6 = 0.48617;
  const double L7 = 0.57219;
  const auto I8 = [](T t) -> T {return 2*sin(3*t);};
  // Port variables
  T v[8], i[8];
  v[0] = x[0];            i[0] = x[0]/R1;
  v[1] = V2(t);           i[1] = x[1];
  v[2] = x[2]/C3;         i[2] = Diff(x[2],1);
  v[3] = Diff(x[3],1);    i[3] = x[3]/L4;
  v[4] = x[4];            i[4] = x[4]/R5;
  v[5] = x[5]/C6;         i[5] = Diff(x[5],1);
  v[6] = Diff(x[6],1);    i[6] = x[6]/L7;
  v[7] = x[7];            i[7] = I8(t);
  // Dirac structure, CpH model, tree={0,1,2,3}, cotree={4,5,6,7}
  f[0] = -i[0]+i[4]-i[7];
  f[1] = -i[1]-i[4]+i[5];
  f[2] = -i[2]-i[5]+i[6];
  f[3] = -i[3]-i[6]+i[7];
  f[4] =  v[4]+v[0]-v[1];
  f[5] =  v[5]+v[1]-v[2];
  f[6] =  v[6]+v[2]-v[3];
  f[7] =  v[7]-v[0]+v[3];
}
\end{lstlisting}


\end{document}